\newtheorem{theorem}{Theorem}
\theoremstyle{plain}
\newtheorem{corollary}{Corollary}
\newtheorem{definition}{Definition}
\newtheorem{proposition}{Proposition}
\newtheorem{remark}{Remark}
\numberwithin{equation}{section}
\begin{document}
\title[HADAMARD-TYPE\ INEQUALITIES]{ON $(h-m)-$CONVEXITY AND HADAMARD-TYPE\
INEQUALITIES}
\author{$^{\blacktriangledown }$M. Emin \"{O}zdemir}
\address{$^{\blacktriangledown }$Ataturk University, K. K. Education
Faculty, Department of Mathematics, 25640, Kampus, Erzurum, Turkey}
\email{emos@atauni.edu.tr}
\author{$^{\bigstar }$Ahmet Ocak Akdemir}
\address{$^{\bigstar }$A\u{g}r\i\ \.{I}brahim \c{C}e\c{c}en University,
Faculty of Science and Arts, Department of Mathematics, 04100, A\u{g}r\i ,
Turkey}
\email{ahmetakdemir@agri.edu.tr}
\author{$^{\blacktriangledown }$Erhan Set}
\address{$^{\blacktriangledown }$Department of Mathematics, Faculty of
Science and Arts, D\"{u}zce University, D\"{u}zce, Turkey}
\email{erhanset@yahoo.com}
\date{February 20, 2011}
\subjclass[2000]{ Primary 26D15, 26A07}
\keywords{$h-$convex, Hadamard's inequality, $m-$convex, $s-$convex.}

\begin{abstract}
In this paper, a new class of convex functions as a generalization of
convexity which is called $(h-m)-$convex functions and some properties of
this class is given. We also prove some \ Hadamard's type inequalities.
\end{abstract}

\maketitle

\section{INTRODUCTION}

The concept of $m-$convexity has been introduced by Toader in \cite{TOA}, an
intermediate between the ordinary convexity and starshaped property, as
following:

\begin{definition}
The function $f:\left[ 0,b\right] \rightarrow 
\mathbb{R}
,$ $b>0,$ is said to be $m-$convex, where $m\in \left[ 0,1\right] ,$ if we
have%
\begin{equation*}
f\left( tx+m\left( 1-t\right) y\right) \leq tf\left( x\right) +m\left(
1-t\right) f\left( y\right)
\end{equation*}%
for all $x,y\in \left[ 0,b\right] $ and $t\in \left[ 0,1\right] .$ We say
that $f$ is $m-$concave if $-f$ is $m-$convex.
\end{definition}

Several papers have been written on $m-$convex functions and we refer the
papers \cite{MER}, \cite{TOA}, \cite{SS2}, \cite{TOA2}, \cite{BAK} and \cite%
{SS3}. In \cite{SS2}, Dragomir and Toader proved following inequality for $%
m- $convex functions.

\begin{theorem}
Let $f:\left[ 0,\infty \right) \rightarrow 
\mathbb{R}
$ be a $m-$convex function with $m\in (0,1].$ If $0\leq a<b<\infty $ and $%
f\in L_{1}\left[ a,b\right] ,$ then one has the inequality:%
\begin{equation}
\frac{1}{b-a}\int_{a}^{b}f\left( x\right) dx\leq \min \left\{ \frac{%
f(a)+mf\left( \frac{b}{m}\right) }{2},\frac{f(b)+mf\left( \frac{a}{m}\right) 
}{2}\right\} .  \label{m1}
\end{equation}
\end{theorem}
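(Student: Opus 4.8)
The plan is to establish each of the two candidate bounds inside the minimum separately, by a single change of variables that exploits the defining $m$-convexity inequality along a well-chosen affine path joining the endpoints; the two bounds are symmetric, so the work is essentially done once.

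First I would focus on the bound $\frac{1}{b-a}\int_{a}^{b}f(x)\,dx\leq \frac{f(a)+mf(b/m)}{2}$. The crucial observation is that every $x\in[a,b]$ can be written in the form $x=ta+m(1-t)(b/m)$, since $m(1-t)(b/m)=(1-t)b$ and hence $x=ta+(1-t)b$ runs over $[a,b]$ as $t$ runs over $[0,1]$. Applying the definition of $m$-convexity with the two points $a$ and $b/m$ gives the pointwise inequality $f(ta+(1-t)b)\leq tf(a)+m(1-t)f(b/m)$ for all $t\in[0,1]$.

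Next I would integrate this inequality over $t\in[0,1]$. On the right-hand side the integrals $\int_{0}^{1}t\,dt$ and $\int_{0}^{1}(1-t)\,dt$ both equal $\frac{1}{2}$, producing exactly $\frac{f(a)+mf(b/m)}{2}$. On the left-hand side, the affine substitution $x=ta+(1-t)b$ (so that $dx=(a-b)\,dt$ and the limits flip) converts $\int_{0}^{1}f(ta+(1-t)b)\,dt$ into $\frac{1}{b-a}\int_{a}^{b}f(x)\,dx$. Combining the two sides yields the first bound.

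For the second bound I would repeat the argument verbatim with the roles of the endpoints exchanged: write $x=tb+m(1-t)(a/m)=tb+(1-t)a$, apply $m$-convexity at the points $b$ and $a/m$, and integrate over $t\in[0,1]$ as before to obtain $\frac{1}{b-a}\int_{a}^{b}f(x)\,dx\leq \frac{f(b)+mf(a/m)}{2}$. Since the left-hand side is dominated by both expressions, it is dominated by their minimum, which is the claim. I expect no serious obstacle: the only point requiring a little care is the legitimacy of integrating the pointwise inequality and performing the change of variables, and this is guaranteed by the hypothesis $f\in L_{1}[a,b]$, since the affine substitution preserves integrability. The whole argument reduces to choosing the correct parametrization so that one endpoint stays fixed while the other is scaled by $1/m$.
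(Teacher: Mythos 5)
Your proof is correct and is essentially the paper's own argument: the paper establishes its $\left(h-m\right)-$convex generalization (inequality (\ref{4})) by writing exactly your two pointwise inequalities $f\left( ta+\left( 1-t\right) b\right) \leq h\left( t\right) f\left( a\right) +mh\left( 1-t\right) f\left( \frac{b}{m}\right)$ and $f\left( tb+\left( 1-t\right) a\right) \leq h\left( t\right) f\left( b\right) +mh\left( 1-t\right) f\left( \frac{a}{m}\right)$, integrating over $t\in \left[ 0,1\right]$, and using the same change of variables, and your argument is precisely the specialization $h(t)=t$, which the paper notes recovers (\ref{m1}). No gaps: the parametrization $x=ta+m\left( 1-t\right) \frac{b}{m}=ta+\left( 1-t\right) b$, the integration of the pointwise bound under $f\in L_{1}\left[ a,b\right]$, and the final passage to the minimum are all handled correctly.
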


In \cite{SS3}, Dragomir established following inequalities of Hadamard-type
similar to above.

\begin{theorem}
Let $f:\left[ 0,\infty \right) \rightarrow 
\mathbb{R}
$ be a $m-$convex function with $m\in (0,1].$ If $0\leq a<b<\infty $ and $%
f\in L_{1}\left[ a,b\right] ,$ then one has the inequality:%
\begin{eqnarray}
f\left( \frac{a+b}{2}\right) &\leq &\frac{1}{b-a}\int_{a}^{b}\frac{%
f(x)+mf\left( \frac{x}{m}\right) }{2}dx  \label{m2} \\
&\leq &\frac{m+1}{4}\left[ \frac{f(a)+f\left( b\right) }{2}+m\frac{f(\frac{a%
}{m})+f\left( \frac{b}{m}\right) }{2}\right] .  \notag
\end{eqnarray}
\end{theorem}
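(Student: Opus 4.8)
The plan is to derive both inequalities directly from the defining inequality of $m$-convexity, exactly in the spirit of the proof of (\ref{m1}): the left-hand estimate from the midpoint case $t=\frac12$, and the right-hand estimate by integrating the defining inequality against the parametrization $x=ta+(1-t)b$.

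For the left inequality, I would first record the midpoint consequence of $m$-convexity. Taking $t=\frac12$ and replacing $y$ by $z/m$ in the definition gives
\[
f\!\left(\frac{x+z}{2}\right)=f\!\left(\frac12 x+\frac m2\cdot\frac zm\right)\leq \frac12 f(x)+\frac m2 f\!\left(\frac zm\right).
\]
I would then choose $x=ta+(1-t)b$ and $z=(1-t)a+tb$, so that $\frac{x+z}{2}=\frac{a+b}{2}$ for every $t\in[0,1]$, and integrate in $t$ over $[0,1]$. After the linear changes of variable $x=ta+(1-t)b$ and $z=(1-t)a+tb$, the two resulting integrals become $\frac{1}{b-a}\int_a^b f$ and $\frac{1}{b-a}\int_a^b f(\cdot/m)$; averaging yields exactly the first inequality. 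This step is routine.

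For the right inequality, the first integral is handled cleanly: $m$-convexity with the points $a$ and $b/m$ gives $f(ta+(1-t)b)=f(ta+m(1-t)\frac bm)\leq t f(a)+m(1-t)f(\frac bm)$, and the symmetric choice gives $f((1-t)a+tb)\leq t f(b)+m(1-t)f(\frac am)$. Adding, integrating in $t$, and dividing by $2$ produces $\frac{1}{b-a}\int_a^b f \leq \frac14[f(a)+f(b)]+\frac m4[f(\frac am)+f(\frac bm)]$, which already carries the four endpoint values and the characteristic $\frac{m+1}{4}$-type weighting.

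The hard part, and the step I expect to be the genuine obstacle, is the second integral $\frac{1}{b-a}\int_a^b f(\frac xm)\,dx$. Writing $\frac xm=t\frac am+(1-t)\frac bm$ and applying $m$-convexity forces the auxiliary points $\frac a{m^2}$ and $\frac b{m^2}$ to appear rather than $\frac am,\frac bm$; equivalently, the substitution $u=x/m$ turns this term into the average of $f$ over $[\frac am,\frac bm]$, whose endpoint bound again lands on $f(\frac a{m^2}),f(\frac b{m^2})$. Reconciling this with the claimed right-hand side, which involves only $f(a),f(b),f(\frac am),f(\frac bm)$, is precisely where the argument must be forced through, so I would scrutinize this step carefully, together with whatever monotonicity or normalization of $f$ it appears to need, before trusting the stated constant $\frac{m+1}{4}$.
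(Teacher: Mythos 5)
Your proof of the first inequality is sound, and it coincides with how the paper itself obtains it: the paper never proves this theorem directly (it is quoted from \cite{SS3}), but its generalization (\ref{5}) is proved by exactly your midpoint argument, namely $f\left( \frac{x+y}{2}\right) \leq h\left( \frac{1}{2}\right) f(x)+mh\left( \frac{1}{2}\right) f\left( \frac{y}{m}\right) $ with $x=ta+(1-t)b$, $y=(1-t)a+tb$, integrated over $t\in \left[ 0,1\right] $; setting $h(t)=t$ there recovers your computation verbatim. Your estimate $\frac{1}{b-a}\int_{a}^{b}f(x)dx\leq \frac{1}{2}\left[ \frac{f(a)+f(b)}{2}+m\frac{f\left( \frac{a}{m}\right) +f\left( \frac{b}{m}\right) }{2}\right] $, obtained by averaging the two bounds behind (\ref{m1}), is also correct.

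The obstacle you flagged in the second inequality is not a failure of your technique but a genuine defect in the statement itself, so your refusal to force the step through was the right call. As you observed, $m$-convexity applied to $f\left( \frac{x}{m}\right) =f\left( t\frac{a}{m}+m(1-t)\frac{b}{m^{2}}\right) $ unavoidably produces the points $\frac{a}{m^{2}},\frac{b}{m^{2}}$; and since $f(my)\leq mf(y)$ (take $t=0$ in the definition), one has $m^{2}f\left( \frac{b}{m^{2}}\right) \geq mf\left( \frac{b}{m}\right) \geq f(b)$, so the honestly provable bound dominates the stated one and cannot be reduced to it. In fact the stated right-hand inequality is false for $m<1$: taking $f(x)=x$, which is $m$-convex with equality, the middle term equals $\frac{1}{b-a}\int_{a}^{b}\frac{x+m\left( x/m\right) }{2}dx=\frac{a+b}{2}$, while the right-hand side equals $\frac{m+1}{4}\left( a+b\right) <\frac{a+b}{2}$ whenever $m<1$ and $a+b>0$. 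What the paper's own machinery actually yields, namely (\ref{5}) with $h(t)=t$, is the upper bound $\frac{f(a)+mf\left( \frac{b}{m}\right) +mf\left( \frac{a}{m}\right) +m^{2}f\left( \frac{b}{m^{2}}\right) }{4}$, carrying exactly the points you predicted; the paper's remark identifying this specialization with (\ref{m2}) glosses over the discrepancy. The correct completion of your argument is therefore to replace the stated right-hand side by this $m^{2}$-bound (the two agree only at $m=1$, where the theorem reduces to the classical Hadamard inequality).
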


\begin{theorem}
Let $f:\left[ 0,\infty \right) \rightarrow 
\mathbb{R}
$ be a $m-$convex function with $m\in (0,1].$ If $f\in L_{1}\left[ am,b%
\right] $ where $0\leq a<b<\infty ,$ then one has the inequality:%
\begin{equation}
\frac{1}{m+1}\left[ \int_{a}^{mb}f(x)dx+\frac{mb-a}{b-ma}\int_{ma}^{b}f(x)dx%
\right] \leq \left( mb-a\right) \frac{f(a)+f\left( b\right) }{2}.  \label{m3}
\end{equation}
\end{theorem}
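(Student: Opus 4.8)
My plan is to exploit the definition of $m$-convexity directly on the two natural interpolations between the endpoints $a$ and $b$, and then integrate. The factor $\frac{mb-a}{b-ma}$ appearing in the statement is the tell-tale sign: it is precisely the weight needed to make two separately-derived integral estimates share a common factor when added. First I would apply $m$-convexity to the path $ta+m(1-t)b$, which runs from $mb$ (at $t=0$) to $a$ (at $t=1$), obtaining
\begin{equation*}
f\left( ta+m\left( 1-t\right) b\right) \leq tf(a)+m\left( 1-t\right) f(b),
\end{equation*}
and symmetrically to the path $tb+m(1-t)a$, which runs from $ma$ to $b$, obtaining
\begin{equation*}
f\left( tb+m\left( 1-t\right) a\right) \leq tf(b)+m\left( 1-t\right) f(a).
\end{equation*}
Both hold for all $t\in \left[ 0,1\right] $ by Definition 1.

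Next I would integrate each inequality in $t$ over $\left[ 0,1\right] $. The right-hand sides integrate trivially to $\frac{f(a)+mf(b)}{2}$ and $\frac{f(b)+mf(a)}{2}$ respectively. For the left-hand sides I would perform the linear change of variables $u=ta+m(1-t)b$ in the first (so $du=(a-mb)\,dt$, with limits $mb$ and $a$) and $v=tb+m(1-t)a$ in the second (so $dv=(b-ma)\,dt$, with limits $ma$ and $b$). Tracking the orientation of the limits, these produce $\frac{1}{mb-a}\int_{a}^{mb}f(x)\,dx$ and $\frac{1}{b-ma}\int_{ma}^{b}f(x)\,dx$. Hence
\begin{equation*}
\int_{a}^{mb}f(x)\,dx\leq \left( mb-a\right) \frac{f(a)+mf(b)}{2},\qquad \int_{ma}^{b}f(x)\,dx\leq \left( b-ma\right) \frac{f(b)+mf(a)}{2}.
\end{equation*}

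Finally I would combine these two estimates with the prescribed weighting: leave the first as is and multiply the second by $\frac{mb-a}{b-ma}$, which cancels the factor $(b-ma)$ and replaces it by $(mb-a)$. Adding the two, the right-hand side becomes $(mb-a)\frac{(1+m)f(a)+(1+m)f(b)}{2}=(mb-a)(m+1)\frac{f(a)+f(b)}{2}$, and dividing through by $m+1$ yields exactly \eqref{m3}. I do not expect a serious obstacle here; the computation is routine once the substitutions are set up. The one point requiring care is the bookkeeping of signs and integration limits in the change of variables (the first substitution reverses orientation since $u$ decreases in $t$), and the implicit standing assumption that $a<mb$ so that the written integrals and the factor $mb-a$ are positive; the complementary inequality $ma<b$ is automatic from $m\leq 1$ and $a<b$.
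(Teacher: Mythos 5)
Your proof is correct and follows essentially the same route as the paper: the paper obtains (\ref{m3}) as the special case $h(t)=t$ of its generalized inequality (\ref{8}), whose proof applies convexity along the same two chords $ta+m\left( 1-t\right) b$ and $tb+m\left( 1-t\right) a$ and uses the same substitutions $\int_{0}^{1}f\left( ta+m\left( 1-t\right) b\right) dt=\frac{1}{mb-a}\int_{a}^{mb}f(x)dx$ and $\int_{0}^{1}f\left( tb+m\left( 1-t\right) a\right) dt=\frac{1}{b-ma}\int_{ma}^{b}f(x)dx$, your weighting by $\frac{mb-a}{b-ma}$ being the same algebra carried out before rather than after summing (the paper's two extra inequalities with $t$ replaced by $1-t$ are redundant reflections). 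Your remark that $a<mb$ must be assumed so that multiplying by $mb-a$ preserves the inequality is a legitimate point of care that the paper, like the original source, leaves implicit.
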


In \cite{WW}, Breckner introduced a new class of convex functions, a
generalization of the ordinary convexity, is called $s-$convexity, as
following;

\begin{definition}
Let $s$ be a real number, $s\in (0,1].$ A function $f:[0,\infty )\rightarrow
\lbrack 0,\infty )$ is said to be $s-$convex (in the second sense), or that $%
f$ belongs to the class $K_{s}^{2}$, if 
\begin{equation*}
f\left( \alpha x+\left( 1-\alpha \right) y\right) \leq \alpha ^{s}f\left(
x\right) +\left( 1-\alpha \right) ^{s}f\left( y\right)
\end{equation*}%
for all $x,y\in \lbrack 0,\infty )$ and $\alpha \in \left[ 0,1\right] .$
\end{definition}

Some properties of $s-$convexity have been given in \cite{HU} and K\i
rmac\i\ \textit{et al.} proved some inequalities for $s-$convex functions in 
\cite{UK}. In \cite{SF}, Dragomir and Fitzpatrick established the following
Hadamard's type inequalities;

\begin{theorem}
Suppose that $f:\left[ 0,\infty \right) \rightarrow \left[ 0,\infty \right) $
is an $s-$convex function in the second sense, where $s\in \left( 0,1\right] 
$ and let $a,b\in \left[ 0,\infty \right) ,$ $a<b.$ If $f\in L_{1}\left[ 0,1%
\right] ,$ then the following inequalities hold: 
\begin{equation}
2^{s-1}f\left( \frac{a+b}{2}\right) \leq \frac{1}{b-a}\int_{a}^{b}f\left(
x\right) dx\leq \frac{f\left( a\right) +f\left( b\right) }{s+1}.  \label{s}
\end{equation}%
The constant $k=\frac{1}{s+1}$ is the best possible in the second inequality
in (\ref{s}). The above inequalities are sharp.
\end{theorem}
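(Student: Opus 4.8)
The plan is to establish the two inequalities in (\ref{s}) separately, each by combining the defining inequality of $s$-convexity with a single change of variable, so that the integrability hypothesis guarantees all integrals below are finite. For the right-hand inequality I would first rewrite the mean value of $f$ over $[a,b]$ in parametric form: the substitution $x=ta+(1-t)b$ with $t\in[0,1]$ gives
\begin{equation*}
\frac{1}{b-a}\int_{a}^{b}f(x)\,dx=\int_{0}^{1}f\left( ta+(1-t)b\right) \,dt.
\end{equation*}
Applying $s$-convexity pointwise in $t$ yields $f\left( ta+(1-t)b\right) \leq t^{s}f(a)+(1-t)^{s}f(b)$, and integrating this over $[0,1]$ produces $f(a)\int_{0}^{1}t^{s}\,dt+f(b)\int_{0}^{1}(1-t)^{s}\,dt$. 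Since each of these elementary integrals equals $\frac{1}{s+1}$ (here the restriction $s\in(0,1]$ is more than enough to ensure convergence), the right inequality follows at once.

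For the left-hand inequality the key idea is a midpoint symmetrization. I would apply the definition of $s$-convexity with $\alpha=\frac{1}{2}$ to the pair of points $u=ta+(1-t)b$ and $v=(1-t)a+tb$, whose average is exactly $\frac{a+b}{2}$ independently of $t$; this gives
\begin{equation*}
f\left( \frac{a+b}{2}\right) \leq 2^{-s}\left[ f\left( ta+(1-t)b\right) +f\left( (1-t)a+tb\right) \right] .
\end{equation*}
Integrating both sides over $t\in[0,1]$ and noting that each of the two resulting integrals again reduces to $\frac{1}{b-a}\int_{a}^{b}f(x)\,dx$ (the second by the reflection $t\mapsto 1-t$), I obtain $f\left( \frac{a+b}{2}\right) \leq 2^{1-s}\frac{1}{b-a}\int_{a}^{b}f(x)\,dx$, which rearranges to the claimed bound after multiplying through by $2^{s-1}$.

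The computations themselves are routine, so the step requiring the most care is the choice in the left inequality of the symmetric parametrization $u,v$ rather than a direct appeal to midpoint convexity on the endpoints $a,b$; this is precisely what forces the factor $2^{-s}$ (in contrast to the factor $\frac{1}{2}$ in the ordinary Hermite--Hadamard inequality) and hence the sharp constant $2^{s-1}$. The sharpness and best-constant assertions would then be verified afterward by testing a suitable power function such as $f(x)=x^{s}$, which is $s$-convex in the second sense because $(p+q)^{s}\leq p^{s}+q^{s}$ for $0<s\leq 1$, and for which both sides of each inequality can be evaluated in closed form to exhibit the extremal behavior.
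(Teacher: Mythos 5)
Your derivation of the two displayed inequalities is correct, and it is worth noting what it corresponds to in the paper: the paper never proves this theorem itself --- it is quoted from \cite{SF} as background --- and instead recovers it a posteriori as the special case $m=1$, $h(t)=t^{s}$ of its general $(h-m)$-convex results (the remark after the first main theorem recovers the right-hand side of (\ref{s}) from (\ref{4}), and the corollary to the second main theorem recovers the left-hand side from (\ref{5})). The proofs of those general theorems use exactly your technique: apply the convexity inequality pointwise along the parametrization $ta+(1-t)b$ and integrate over $t\in[0,1]$, with the symmetric pair $u=ta+(1-t)b$, $v=(1-t)a+tb$ and $\alpha=\frac{1}{2}$ for the midpoint bound. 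So your argument is the standard Dragomir--Fitzpatrick proof and coincides in method with the paper's general machinery, merely specialized. (Incidentally, the hypothesis $f\in L_{1}[0,1]$ in the statement is a misprint for $f\in L_{1}[a,b]$, which is what your change of variables actually requires.)

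The one genuine gap is in your sharpness plan. Testing $f(x)=x^{s}$ with $a=0$, $b=1$ does settle the best-constant claim for the \emph{second} inequality: there $\frac{1}{b-a}\int_{a}^{b}f(x)\,dx=\frac{1}{s+1}=\frac{f(0)+f(1)}{s+1}$, so equality holds and $k=\frac{1}{s+1}$ cannot be lowered. But the same function does not ``exhibit the extremal behavior'' in the \emph{first} inequality: for it, $2^{s-1}f\left(\frac{a+b}{2}\right)=2^{s-1}\cdot 2^{-s}=\frac{1}{2}$, which is strictly less than $\frac{1}{s+1}$ for every $s\in(0,1)$, so your candidate gives strict inequality on the left and proves nothing about the optimality of $2^{s-1}$. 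Indeed, equality in your midpoint step would require $f(u)+f(v)=2^{s}f\left(\frac{u+v}{2}\right)$ for almost every symmetric pair, which forces $f\equiv 0$ among, say, constant functions when $s<1$; so sharpness of the left-hand constant cannot be discharged by exhibiting one closed-form extremal function and needs a separate argument (this is handled in \cite{SF}, not in the present paper). As written, your proposal fully proves the chain (\ref{s}) and the best-constant assertion for the right-hand side, but leaves the sharpness assertion for the left-hand side unsubstantiated.
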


In \cite{GO}, Godunova and Levin introduced the following class of functions.

\begin{definition}
A function $f:I\subseteq 
\mathbb{R}
\rightarrow 
\mathbb{R}
$ is said to belong to the class of $Q(I)$ if it is nonnegative and, for all 
$x,y\in I$ and $\lambda \in (0,1)$ satisfies the inequality;%
\begin{equation*}
f(\lambda x+(1-\lambda )y)\leq \frac{f(x)}{\lambda }+\frac{f(y)}{1-\lambda }
\end{equation*}
\end{definition}

In \cite{SS1}, Dragomir \textit{et al}. defined following new class of
functions.

\begin{definition}
A function $f:I\subseteq 
\mathbb{R}
\rightarrow 
\mathbb{R}
$ is $P$ function or that $f$ belongs to the class of $P(I),$ if it is
nonnegative and for all $x,y\in I$ and $\lambda \in \lbrack 0,1],$ satisfies
the following inequality;

\begin{equation*}
f(\lambda x+(1-\lambda )y)\leq f(x)+f(y)
\end{equation*}
\end{definition}

In \cite{SS1}, Dragomir \textit{et al}. proved two inequalities of
Hadamard's type for class of Godunova-Levin functions and $P-$ functions.

\begin{theorem}
Let $f\in Q(I),a,b\in I,$ with $a<b$ and $f\in L_{1}[a,b].$ Then the
following inequality holds.
\end{theorem}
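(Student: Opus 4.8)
The plan is to establish the midpoint Hadamard-type bound
\[
f\left( \frac{a+b}{2}\right) \leq \frac{4}{b-a}\int_{a}^{b}f(x)\,dx
\]
by exploiting the defining inequality of the class $Q(I)$ with the single most useful choice of parameter, namely $\lambda =\frac{1}{2}$. First I would parametrize the interval symmetrically: for $t\in \lbrack 0,1]$ set $x=ta+(1-t)b$ and $y=(1-t)a+tb$, both of which lie in $I$ since they are convex combinations of the endpoints $a$ and $b$. The whole point of this symmetric choice is that $\frac{x+y}{2}=\frac{a+b}{2}$ \emph{independently} of $t$, so that every admissible pair $(x,y)$ produces the same midpoint.

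Next I would feed the pair $(x,y)$ into the Godunova-Levin inequality with $\lambda =\frac{1}{2}$, obtaining
\[
f\left( \frac{a+b}{2}\right) =f\left( \frac{1}{2}x+\frac{1}{2}y\right) \leq \frac{f(x)}{1/2}+\frac{f(y)}{1/2}=2\left[ f(x)+f(y)\right].
\]
Since this holds for every $t\in \lbrack 0,1]$, I would integrate both sides in $t$ over $[0,1]$: the left-hand side is constant and reproduces itself, while the right-hand side becomes $2\int_{0}^{1}\left[ f(ta+(1-t)b)+f((1-t)a+tb)\right] dt$.

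The final step is the change of variables $u=ta+(1-t)b$ together with the analogous substitution for the second summand; each carries the corresponding integral into $\frac{1}{b-a}\int_{a}^{b}f(u)\,du$, so that both reduce to the mean value of $f$ over $[a,b]$. Collecting the resulting factors then yields exactly the constant $4$. I do not expect a genuine analytic obstacle here: the one point demanding care is the legitimacy of integrating the pointwise inequality, and this is justified precisely by the hypothesis $f\in L_{1}[a,b]$, with the nonnegativity built into the definition of $Q(I)$ ensuring that the right-hand side is well behaved. The real crux of the argument lies in the symmetric parametrization of the first step, since that is what collapses the two-variable defining inequality onto the single midpoint $\frac{a+b}{2}$ and makes the two integrals coincide.
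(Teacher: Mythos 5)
Your proof is correct, and it delivers exactly the claimed inequality (\ref{q}) with the right constant: the symmetric choice $x=ta+(1-t)b$, $y=(1-t)a+tb$, the Godunova--Levin inequality at $\lambda =\frac{1}{2}$, integration over $t\in \lbrack 0,1]$, and the affine change of variables are all sound, with $f\in L_{1}[a,b]$ justifying the integration step. The paper itself quotes this theorem from the literature without proof, but your argument is precisely the technique the paper uses for its own results --- compare the proof of inequality (\ref{5}), which makes the same $\alpha =\frac{1}{2}$ substitution with the same symmetric parametrization and then integrates --- so there is nothing to fix and no genuine divergence in method.
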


\begin{equation}
\ \ \ f\left( \frac{a+b}{2}\right) \leq \frac{4}{b-a}\int_{a}^{b}f(x)dx
\label{q}
\end{equation}

\begin{theorem}
Let $f\in P(I),a,b\in I,$ with $a<b$ and $f\in L_{1}[a,b].$ Then the
following inequality holds.%
\begin{equation}
f\left( \frac{a+b}{2}\right) \leq \frac{2}{b-a}\int_{a}^{b}f(x)dx\leq
2[f(a)+f(b)]  \label{p}
\end{equation}
\end{theorem}

On all of these, in \cite{SA}, Varo\v{s}anec defined $h-$convex functions
and gave some properties of this class of functions.

\begin{definition}
Let $h:J\subseteq 
\mathbb{R}
\rightarrow 
\mathbb{R}
$\ be a positive function . We say that $f:I\subseteq 
\mathbb{R}
\rightarrow 
\mathbb{R}
$ is $h-$convex function, or that $f$ belongs to the class $SX\left(
h,I\right) $, if $f$ is nonnegative and for all $x,y\in I$\ and $t\in \left(
0,1\right) $\ we have \ \ \ \ \ \ \ \ \ \ \ \ 
\begin{equation}
f\left( tx+\left( 1-t\right) y\right) \leq h\left( t\right) f\left( x\right)
+h\left( 1-t\right) f\left( y\right) .  \label{h}
\end{equation}
\end{definition}

If inequality (\ref{h}) is reversed, then $f$ is said to be $h-$concave,
i.e., $f\in SV\left( h,I\right) $. Obviously, if $h\left( t\right) =t$, then
all nonnegative convex functions belong to $SX\left( h,I\right) $\ and all
nonnegative concave functions belong to $SV(h,I);$ if $h(t)=\frac{1}{t},$
then $SX(h,I)=Q(I);$ if $h(t)=1,$ $SX(h,I)\supseteq P(I);$ and if $%
h(t)=t^{s},$ where $s\in \left( 0,1\right) ,$ then $SX(h,I)\supseteq
K_{s}^{2}.$

\begin{theorem}
(See \cite{SAR}, Theorem 6) Let $f\in SX(h,I),$ $a,b\in I,$ with $a<b$ and $%
f\in L_{1}\left( \left[ a,b\right] \right) .$ Then 
\begin{equation}
\frac{1}{2h\left( \frac{1}{2}\right) }f\left( \frac{a+b}{2}\right) \leq 
\frac{1}{\left( b-a\right) }\int_{a}^{b}f\left( x\right) dx\leq \left[
f\left( a\right) +f\left( b\right) \right] \int_{0}^{1}h\left( \alpha
\right) d\alpha  \label{h1}
\end{equation}
\end{theorem}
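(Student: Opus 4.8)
The plan is to reduce everything to integrals over the parameter interval $[0,1]$ by the affine change of variables $x=ta+(1-t)b$, under which $dx=(a-b)\,dt$ and
\begin{equation*}
\frac{1}{b-a}\int_{a}^{b}f(x)\,dx=\int_{0}^{1}f\left( ta+(1-t)b\right) \,dt.
\end{equation*}
A symmetric relabelling $t\mapsto 1-t$ shows that this equals $\int_{0}^{1}f\left( (1-t)a+tb\right) \,dt$ as well, a fact I will use for the left-hand estimate.

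For the first inequality I would represent the midpoint as the half-sum of two symmetric points: for each $t$ set $u=ta+(1-t)b$ and $v=(1-t)a+tb$, so that $\tfrac{u+v}{2}=\tfrac{a+b}{2}$. Applying the defining inequality (\ref{h}) with parameter $t=\tfrac{1}{2}$ gives
\begin{equation*}
f\left( \frac{a+b}{2}\right) \leq h\left( \tfrac{1}{2}\right) \left[ f(u)+f(v)\right] .
\end{equation*}
Integrating both sides over $t\in \lbrack 0,1]$ and invoking the two equal representations of $\frac{1}{b-a}\int_{a}^{b}f$ established above collapses the right side to $2h\left( \tfrac{1}{2}\right) \cdot \frac{1}{b-a}\int_{a}^{b}f(x)\,dx$; dividing by $2h\left( \tfrac{1}{2}\right) $ yields the left inequality.

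For the second inequality I would apply (\ref{h}) directly to the integrand after the change of variables, namely $f\left( ta+(1-t)b\right) \leq h(t)f(a)+h(1-t)f(b)$ for $t\in (0,1)$. Integrating in $t$ over $[0,1]$ and using $\int_{0}^{1}h(1-t)\,dt=\int_{0}^{1}h(\alpha )\,d\alpha $ (again the substitution $t\mapsto 1-t$) gives
\begin{equation*}
\frac{1}{b-a}\int_{a}^{b}f(x)\,dx\leq \left[ f(a)+f(b)\right] \int_{0}^{1}h(\alpha )\,d\alpha ,
\end{equation*}
which is exactly the right-hand bound.

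I do not anticipate a serious obstacle here; the argument is a direct adaptation of the classical Hermite--Hadamard proof, with the weights $t$ and $1-t$ replaced by $h(t)$ and $h(1-t)$. The only points requiring care are the symmetric representation of the midpoint (which is what produces the factor $2h\left( \tfrac{1}{2}\right) $ rather than a single $h\left( \tfrac{1}{2}\right) $) and the implicit assumption that $h$ is integrable on $[0,1]$, so that $\int_{0}^{1}h(\alpha )\,d\alpha $ is finite. Positivity of $h$ guarantees that the bounds carry the stated sign, while nonnegativity of $f$ keeps the endpoint terms well behaved.
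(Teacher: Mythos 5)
Your proof is correct and takes essentially the same route as the source: the paper itself only quotes this theorem from \cite{SAR} without reproving it, but your argument --- the symmetric pair $u=ta+(1-t)b$, $v=(1-t)a+tb$ with parameter $\tfrac{1}{2}$ for the midpoint bound, and direct integration of $f\left( ta+(1-t)b\right) \leq h(t)f(a)+h(1-t)f(b)$ for the upper bound --- is precisely the proof given there, and it is the same device the paper deploys for its $(h-m)$-convex analogues (\ref{4}) and (\ref{5}). Your caveats are also the right ones: positivity of $h$ guarantees $h\left( \tfrac{1}{2}\right) >0$ so the division is legitimate, and integrability of $h$ on $[0,1]$ must be assumed (or the right-hand side read as possibly infinite) for the upper bound to have content.
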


For some recent results for $h-$convex functions we refer the interest of
reader to the papers \cite{SAR}, \cite{SA2}, \cite{OZ1} and \cite{BU}.

The main aim of this paper is to give a new class of convex functions and to
give some properties of this functions, by using a similar way to proof of
properties of $h-$convexity (see \cite{SA}). Therefore, some inequalities of
Hadamard-type related to this new class of convex functions are given.

\section{MAIN\ RESULTS}

We will introduce a new class of convex functions in the following
definition.

\begin{definition}
Let $h:J\subseteq 
\mathbb{R}
\rightarrow 
\mathbb{R}
$ be a non-negative function. We say that $f:\left[ 0,b\right] \rightarrow 
\mathbb{R}
$ is a $\left( h-m\right) -$convex function , if $f$ is non-negative and for
all $x,y\in \left[ 0,b\right] $, $m\in \left[ 0,1\right] $ and $\alpha \in
\left( 0,1\right) ,$ we have 
\begin{equation}
f\left( \alpha x+m\left( 1-\alpha \right) y\right) \leq h\left( \alpha
\right) f\left( x\right) +mh\left( 1-\alpha \right) f\left( y\right) .
\label{1}
\end{equation}%
If the inequality (\ref{1}) is reversed, then $f$ is said to be $\left(
h-m\right) -$concave function on $\left[ 0,b\right] .$
\end{definition}

Obviously, if we choose $m=1$, then we have $h-$convex functions. If we
choose $h\left( \alpha \right) =\alpha $, then we obtain non-negative $m-$%
convex functions. If we choose $m=1$ and $h\left( \alpha \right) =\left\{
\alpha ,\text{ }1,\text{ }\frac{1}{\alpha },\text{ }\alpha ^{s}\right\} ,$
then we obtain the following classes of functions, non-negative convex
functions, $P-$functions, Godunova-Levin functions and $s-$convex functions
(in the second sense), respectively.

\begin{remark}
Let $h$ be a non-negative function such that 
\begin{equation*}
h\left( \alpha \right) \geq \alpha
\end{equation*}%
for all $\alpha \in \left( 0,1\right) .$ If $f$ is a non-negative $m-$convex
function on $\left[ 0,b\right] ,$ then for all $x,y\in \left[ 0,b\right] ,$ $%
m\in \left[ 0,1\right] $ and $\alpha \in \left( 0,1\right) ,$ we have 
\begin{equation*}
f\left( \alpha x+m\left( 1-\alpha \right) y\right) \leq \alpha f\left(
x\right) +m\left( 1-\alpha \right) f\left( y\right) \leq h\left( \alpha
\right) f\left( x\right) +mh\left( 1-\alpha \right) f\left( y\right) .
\end{equation*}%
This shows that $f$ is a $\left( h-m\right) -$convex function. By a similar
way, one can see that, if 
\begin{equation*}
h\left( \alpha \right) \leq \alpha
\end{equation*}%
for all $\alpha \in \left( 0,1\right) .$ Then, all non-negative $m-$concave
functions are $\left( h-m\right) -$concave function on $\left[ 0,b\right] .$
\end{remark}

\begin{proposition}
Let $h_{1},h_{2}$ be non-negative functions defined on $J\subseteq 
\mathbb{R}
$ such that%
\begin{equation*}
h_{2}\left( t\right) \leq h_{1}\left( t\right)
\end{equation*}%
for $t\in \left( 0,1\right) .$ If $f$ is $\left( h_{2}-m\right) -$convex,
then $f$ is $\left( h_{1}-m\right) -$convex.
\end{proposition}

\begin{proof}
If $f$ is $\left( h_{2}-m\right) -$convex, then for all $x,y\in \left[ 0,b%
\right] $ and $\alpha \in \left( 0,1\right) ,$ we can write%
\begin{eqnarray*}
f\left( \alpha x+m\left( 1-\alpha \right) y\right) &\leq &h_{2}\left( \alpha
\right) f\left( x\right) +mh_{2}\left( 1-\alpha \right) f\left( y\right) \\
&\leq &h_{1}\left( \alpha \right) f\left( x\right) +mh_{1}\left( 1-\alpha
\right) f\left( y\right) .
\end{eqnarray*}%
Which completes the proof of $\left( h_{1}-m\right) -$convexity of $f$.
\end{proof}

\begin{proposition}
If $f,g$ are $\left( h-m\right) -$convex functions and $\lambda >0,$ then $%
f+g$ and $\lambda f$ \ are $\left( h-m\right) -$convex functions.
\end{proposition}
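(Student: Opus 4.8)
The plan is to verify the defining inequality \eqref{1} directly for each of the two functions, exploiting the additive structure of the bound on the right-hand side. First I would fix arbitrary $x,y\in\left[0,b\right]$, $m\in\left[0,1\right]$, and $\alpha\in\left(0,1\right)$, since these are the only data entering the definition, and then treat the two claims separately.

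For $f+g$, the idea is to write down the $\left(h-m\right)$-convexity inequality for $f$ and for $g$ at the same point $\alpha x+m\left(1-\alpha\right)y$, and simply add them. This gives
\begin{eqnarray*}
\left(f+g\right)\left(\alpha x+m\left(1-\alpha\right)y\right)
&=&f\left(\alpha x+m\left(1-\alpha\right)y\right)+g\left(\alpha x+m\left(1-\alpha\right)y\right)\\
&\leq&h\left(\alpha\right)f\left(x\right)+mh\left(1-\alpha\right)f\left(y\right)\\
&&+\,h\left(\alpha\right)g\left(x\right)+mh\left(1-\alpha\right)g\left(y\right)\\
&=&h\left(\alpha\right)\left(f+g\right)\left(x\right)+mh\left(1-\alpha\right)\left(f+g\right)\left(y\right),
\end{eqnarray*}
after grouping the $h\left(\alpha\right)$ and $mh\left(1-\alpha\right)$ coefficients, which is exactly \eqref{1} for $f+g$. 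For $\lambda f$, I would take the single inequality for $f$ and multiply through by the positive constant $\lambda$; since $\lambda>0$ preserves the direction of the inequality, one obtains
\begin{equation*}
\left(\lambda f\right)\left(\alpha x+m\left(1-\alpha\right)y\right)\leq h\left(\alpha\right)\left(\lambda f\right)\left(x\right)+mh\left(1-\alpha\right)\left(\lambda f\right)\left(y\right).
\end{equation*}

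The only remaining obligation is the non-negativity requirement built into the definition of a $\left(h-m\right)$-convex function. This is immediate: as $f,g\geq0$ we have $f+g\geq0$, and as $\lambda>0$ and $f\geq0$ we have $\lambda f\geq0$. There is no genuine obstacle in this argument; the proof is entirely mechanical, and the single point that requires the hypothesis $\lambda>0$ rather than merely $\lambda\neq0$ is the preservation of the inequality sign under scalar multiplication, which would fail for negative $\lambda$.
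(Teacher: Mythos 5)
Your proof is correct and follows essentially the same route as the paper's: adding the two defining inequalities \eqref{2} and \eqref{3} for $f+g$, and multiplying the inequality for $f$ by $\lambda>0$ for $\lambda f$. Your explicit check of the non-negativity requirement is a small point the paper leaves implicit, but the argument is otherwise identical.
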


\begin{proof}
By using definition of $\left( h-m\right) -$convex functions, we can write%
\begin{equation}
f\left( \alpha x+m\left( 1-\alpha \right) y\right) \leq h\left( \alpha
\right) f\left( x\right) +mh\left( 1-\alpha \right) f\left( y\right)
\label{2}
\end{equation}%
and%
\begin{equation}
g\left( \alpha x+m\left( 1-\alpha \right) y\right) \leq h\left( \alpha
\right) g\left( x\right) +mh\left( 1-\alpha \right) g\left( y\right)
\label{3}
\end{equation}%
for all $x,y\in \left[ 0,b\right] ,$ $m\in \left[ 0,1\right] $ and $\alpha
\in \left( 0,1\right) .$ If we add (\ref{2}) and (\ref{3}), we get%
\begin{equation*}
\left( f+g\right) \left( \alpha x+m\left( 1-\alpha \right) y\right) \leq
h\left( \alpha \right) \left( f+g\right) \left( x\right) +mh\left( 1-\alpha
\right) \left( f+g\right) \left( y\right) .
\end{equation*}%
This shows that $f+g$ is $\left( h-m\right) -$convex function. Therefore, to
prove $\left( h-m\right) -$convexity of $\lambda f,$ from the definition, we
have%
\begin{equation*}
\lambda f\left( \alpha x+m\left( 1-\alpha \right) y\right) \leq h\left(
\alpha \right) \lambda f\left( x\right) +mh\left( 1-\alpha \right) \lambda
f\left( y\right) .
\end{equation*}%
This completes the proof.
\end{proof}

The following inequality of Hadamard-type for $\left( h-m\right) -$convex
functions holds.

\begin{theorem}
Let $f:[0,\infty )\rightarrow 
\mathbb{R}
$ be a $\left( h-m\right) -$convex function with $m\in (0,1],$ $t\in \left[
0,1\right] .$ If $0\leq a<b<\infty $ and $f\in L_{1}\left[ a,b\right] ,$
then the following inequality holds;%
\begin{eqnarray}
\frac{1}{b-a}\dint\limits_{a}^{b}f(x)dx &\leq &\min \left\{ f\left( a\right)
\dint\limits_{0}^{1}h\left( t\right) dt+mf\left( \frac{b}{m}\right)
\dint\limits_{0}^{1}h\left( 1-t\right) dt,\right.  \label{4} \\
&&\left. f\left( b\right) \dint\limits_{0}^{1}h\left( t\right) dt+mf\left( 
\frac{a}{m}\right) \dint\limits_{0}^{1}h\left( 1-t\right) dt\right\} . 
\notag
\end{eqnarray}
\end{theorem}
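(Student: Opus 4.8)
The plan is to establish both bounds inside the $\min$ by exploiting the defining inequality $(\ref{1})$ applied to a carefully chosen parametrization of the interval $[a,b]$. The key idea is to write each point $x \in [a,b]$ as an $(h-m)$-convex combination so that the endpoints $a,b$ and the scaled endpoints $\frac{a}{m},\frac{b}{m}$ appear as the two arguments. For the first bound, I would substitute $x = ta + m(1-t)\frac{b}{m} = ta + (1-t)b$, so that as $t$ ranges over $[0,1]$ the point $x$ traces out $[a,b]$. Applying $(\ref{1})$ with the roles $x \mapsto a$ and $y \mapsto \frac{b}{m}$ gives
\begin{equation*}
f\!\left(ta + m(1-t)\tfrac{b}{m}\right) \leq h(t)f(a) + m\,h(1-t)f\!\left(\tfrac{b}{m}\right).
\end{equation*}

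Next I would integrate this inequality over $t \in [0,1]$. On the left, the substitution $x = ta + (1-t)b$ yields $dx = (a-b)\,dt$, so that $\int_0^1 f(ta+(1-t)b)\,dt = \frac{1}{b-a}\int_a^b f(x)\,dx$ after accounting for the orientation of the limits. On the right, since $f(a)$ and $mf\!\left(\frac{b}{m}\right)$ are constants with respect to $t$, integration simply produces the factors $\int_0^1 h(t)\,dt$ and $\int_0^1 h(1-t)\,dt$ respectively. This gives exactly the first expression inside the $\min$. The second bound follows by the symmetric choice $x = tb + m(1-t)\frac{a}{m} = tb + (1-t)a$, applying $(\ref{1})$ with $x \mapsto b$ and $y \mapsto \frac{a}{m}$, and integrating in the same fashion; the change of variables again reproduces $\frac{1}{b-a}\int_a^b f(x)\,dx$ on the left.

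Since the left-hand side $\frac{1}{b-a}\int_a^b f(x)\,dx$ is bounded above by each of the two right-hand expressions independently, it is bounded above by their minimum, which is precisely the claimed inequality $(\ref{4})$. I expect the main obstacle to be purely bookkeeping rather than conceptual: one must handle the orientation of the integration limits carefully (the substitution reverses the interval, so the sign of $dx$ and the swapping of limits must cancel correctly to yield a positive factor $\frac{1}{b-a}$), and one must verify that the parametrization $ta + (1-t)b$ indeed lies in $[0,b]$ so that the definition applies with $\frac{b}{m} \in [0,b]$ being admissible — this requires the implicit assumption that the scaled points remain in the domain, which is standard in the $m$-convexity literature. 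Everything else is a direct application of the definition followed by termwise integration.
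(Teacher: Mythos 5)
Your proposal is correct and matches the paper's own proof essentially verbatim: both apply the defining inequality with $y=\frac{b}{m}$ (so that $ta+m(1-t)\frac{b}{m}=ta+(1-t)b$) and symmetrically with $y=\frac{a}{m}$, integrate over $t\in[0,1]$, identify each integral with $\frac{1}{b-a}\int_{a}^{b}f(x)\,dx$ via the change of variables, and take the minimum of the two resulting bounds. Your additional remarks on the orientation of the substitution and the implicit domain requirement for $\frac{a}{m},\frac{b}{m}$ are sound bookkeeping points that the paper passes over silently.
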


\begin{proof}
From the definition of $\left( h-m\right) -$convex functions, we can write%
\begin{equation*}
f\left( tx+m\left( 1-t\right) y\right) \leq h\left( t\right) f\left(
x\right) +mh\left( 1-t\right) f\left( y\right)
\end{equation*}%
for all $x,y\geq 0.$ It follows that; for all $t\in \left[ 0,1\right] ,$%
\begin{equation*}
f\left( ta+\left( 1-t\right) b\right) \leq h\left( t\right) f\left( a\right)
+mh\left( 1-t\right) f\left( \frac{b}{m}\right)
\end{equation*}%
and%
\begin{equation*}
f\left( tb+\left( 1-t\right) a\right) \leq h\left( t\right) f\left( b\right)
+mh\left( 1-t\right) f\left( \frac{a}{m}\right) .
\end{equation*}%
Integrating these inequalities on $\left[ 0,1\right] ,$ with respect to $t,$
we obtain%
\begin{equation*}
\dint\limits_{0}^{1}f\left( ta+\left( 1-t\right) b\right) dt\leq f\left(
a\right) \dint\limits_{0}^{1}h\left( t\right) dt+mf\left( \frac{b}{m}\right)
\dint\limits_{0}^{1}h\left( 1-t\right) dt
\end{equation*}%
and%
\begin{equation*}
\dint\limits_{0}^{1}f\left( tb+\left( 1-t\right) a\right) dt\leq f\left(
b\right) \dint\limits_{0}^{1}h\left( t\right) dt+mf\left( \frac{a}{m}\right)
\dint\limits_{0}^{1}h\left( 1-t\right) dt.
\end{equation*}%
It is easy to see that;%
\begin{equation*}
\dint\limits_{0}^{1}f\left( ta+\left( 1-t\right) b\right)
dt=\dint\limits_{0}^{1}f\left( tb+\left( 1-t\right) a\right) dt=\frac{1}{b-a}%
\dint\limits_{a}^{b}f(x)dx.
\end{equation*}%
Using this equality, we obtain the required result.
\end{proof}

\begin{corollary}
If we choose $h(t)=1$ in (\ref{4})$,$ we obtain the following inequality;%
\begin{equation*}
\frac{1}{b-a}\dint\limits_{a}^{b}f(x)dx\leq \min \left\{ f\left( a\right)
+mf\left( \frac{b}{m}\right) ,\text{ }f\left( b\right) +mf\left( \frac{a}{m}%
\right) \right\} .
\end{equation*}
\end{corollary}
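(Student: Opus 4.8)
The plan is to apply the preceding theorem directly with the single specialization $h(t)=1$, so the entire argument reduces to a substitution followed by the evaluation of two elementary integrals. First I would note that taking $h\equiv 1$ is a legitimate choice in inequality (\ref{4}): the constant function $1$ is non-negative, so the hypotheses required for the theorem (with this particular $h$) are in force and the inequality (\ref{4}) applies verbatim. Thus the bound for $\frac{1}{b-a}\int_a^b f(x)\,dx$ is immediately available, and what remains is purely computational.

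Next I would evaluate the two definite integrals appearing on the right-hand side of (\ref{4}). With $h(t)=1$ one has $\int_0^1 h(t)\,dt=\int_0^1 1\,dt=1$, and the reflection $t\mapsto 1-t$ leaves the constant integrand unchanged, so likewise $\int_0^1 h(1-t)\,dt=\int_0^1 1\,dt=1$. Substituting these two values into each entry of the minimum in (\ref{4}) collapses the weighted sums: the first entry $f(a)\int_0^1 h(t)\,dt+mf\!\left(\tfrac{b}{m}\right)\int_0^1 h(1-t)\,dt$ becomes $f(a)+mf\!\left(\tfrac{b}{m}\right)$, and by the same token the second entry becomes $f(b)+mf\!\left(\tfrac{a}{m}\right)$.

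Finally, taking the minimum of these two simplified expressions reproduces exactly the asserted bound $\min\{f(a)+mf(\tfrac{b}{m}),\ f(b)+mf(\tfrac{a}{m})\}$, which completes the derivation. There is no genuine obstacle to overcome: the statement is a direct corollary of the theorem obtained by the trivial evaluation of unit integrals, requiring neither further estimation nor any case analysis. The only point worth recording explicitly is the observation that both $\int_0^1 h(t)\,dt$ and $\int_0^1 h(1-t)\,dt$ reduce to $1$ under the constant choice of $h$, which is what drives the simplification.
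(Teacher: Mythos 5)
Your proposal is correct and matches the paper's intent exactly: the corollary is stated without proof precisely because it follows from inequality (\ref{4}) by the substitution $h(t)=1$ together with the trivial evaluations $\int_0^1 h(t)\,dt=\int_0^1 h(1-t)\,dt=1$, which is what you carry out. Nothing is missing and no alternative route is involved.
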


\begin{corollary}
If we choose $m=1$ in (\ref{4})$,$ we obtain the following inequality;%
\begin{eqnarray*}
\frac{1}{b-a}\dint\limits_{a}^{b}f(x)dx &\leq &\min \left\{ f\left( a\right)
\dint\limits_{0}^{1}h\left( t\right) dt+f\left( b\right)
\dint\limits_{0}^{1}h\left( 1-t\right) dt,\right. \\
&&\left. f\left( b\right) \dint\limits_{0}^{1}h\left( t\right) dt+f\left(
a\right) \dint\limits_{0}^{1}h\left( 1-t\right) dt\right\} .
\end{eqnarray*}
\end{corollary}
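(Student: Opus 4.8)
The plan is entirely a specialization argument: the corollary is obtained from inequality (\ref{4}) of the preceding theorem by fixing the parameter at $m=1$. Since $m=1$ lies in $(0,1]$, the hypotheses of that theorem hold verbatim for the same $f$, $a$, $b$, so no new estimates are required and the whole matter reduces to a substitution.

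First I would record that setting $m=1$ collapses the two rescaled arguments: $\frac{b}{m}=b$ and $\frac{a}{m}=a$, while the prefactor $m$ multiplying the second summand in each branch of the minimum becomes $1$. The two integrals $\int_{0}^{1}h(t)\,dt$ and $\int_{0}^{1}h(1-t)\,dt$ are unaffected, since neither $h$ nor the variable of integration $t$ depends on $m$. Substituting these simplifications into the two expressions inside the $\min$ in (\ref{4}) turns $f(a)\int_{0}^{1}h(t)\,dt+mf(\frac{b}{m})\int_{0}^{1}h(1-t)\,dt$ into $f(a)\int_{0}^{1}h(t)\,dt+f(b)\int_{0}^{1}h(1-t)\,dt$, and likewise for the other branch with the roles of $a$ and $b$ interchanged. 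This is precisely the right-hand side claimed in the corollary, so the stated inequality follows at once.

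There is essentially no obstacle here; the only point worth noting — and it is not needed for the argument — is that at $m=1$ the notion of $(h-m)$-convexity degenerates to ordinary $h$-convexity, so the corollary may equivalently be read as the Hadamard-type bound for $h$-convex functions. For the proof itself this interpretation is superfluous, since the passage from (\ref{4}) to the displayed inequality is purely formal.
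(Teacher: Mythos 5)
Your proposal is correct and matches the paper's (implicit) treatment exactly: the corollary is obtained by the purely formal substitution $m=1$ into inequality (\ref{4}), noting $\frac{a}{m}=a$, $\frac{b}{m}=b$ and that the factor $m$ becomes $1$, which is why the paper states it without a separate proof. Nothing further is needed.
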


\begin{remark}
If we choose $h(t)=t$ in (\ref{4})$,$ we obtain the inequality (\ref{m1}).
\end{remark}

\begin{remark}
If we choose $m=1$ and $h(t)=t$ in (\ref{4})$,$ we obtain the right hand
side of the Hadamard's inequality. If we choose $m=1$ and $h(t)=1$ in (\ref%
{4})$,$ we obtain the right hand side of the inequality (\ref{p}). If we
choose $m=1$ and $h(t)=t^{s}$ in (\ref{4})$,$ we obtain the right hand side
of the inequality (\ref{s}).
\end{remark}

Another result of Hadamard-type for $(h-m)-$convex functions is emboided in
the following theorem.

\begin{theorem}
Let $f:[0,\infty )\rightarrow 
\mathbb{R}
$ be a $\left( h-m\right) -$convex function with $m\in (0,1],$ $t\in \left[
0,1\right] .$ If $0\leq a<b<\infty $ and $f\in L_{1}\left[ a,b\right] ,$
then the following inequality holds;%
\begin{eqnarray}
f\left( \frac{a+b}{2}\right) &\leq &\frac{h\left( \frac{1}{2}\right) }{b-a}%
\dint\limits_{a}^{b}\left[ f(x)+mf\left( \frac{x}{m}\right) \right] dx
\label{5} \\
&\leq &h\left( \frac{1}{2}\right) \left[ \frac{f(a)+mf\left( \frac{b}{m}%
\right) +mf\left( \frac{a}{m}\right) +m^{2}f\left( \frac{b}{m^{2}}\right) }{2%
}\right]  \notag
\end{eqnarray}
\end{theorem}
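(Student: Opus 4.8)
The plan is to prove the two inequalities separately and to reuse what is already available. For the left inequality I would invoke the defining inequality of $(h-m)$-convexity only at the single value $\alpha=\frac12$, which is precisely where the factor $h(\frac12)$ is born; for the right inequality I would not re-derive anything from scratch but instead apply the already-established inequality (\ref{4}) twice, once to $f$ and once to the auxiliary function $g(x):=f(x/m)$.

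For the left inequality I would first record the elementary identity $\frac{1}{2}\bigl(ta+(1-t)b\bigr)+\frac{m}{2}\cdot\frac{(1-t)a+tb}{m}=\frac{a+b}{2}$, valid for every $t\in[0,1]$. Applying the definition (\ref{1}) with $\alpha=\frac12$, $x=ta+(1-t)b$ and $y=\frac{(1-t)a+tb}{m}$ then gives $f\!\left(\frac{a+b}{2}\right)\le h\!\left(\frac12\right)\!\left[f(ta+(1-t)b)+mf\!\left(\frac{(1-t)a+tb}{m}\right)\right]$ for each $t$. Integrating over $t\in[0,1]$ and using the two standard substitutions $\int_0^1 f(ta+(1-t)b)\,dt=\frac{1}{b-a}\int_a^b f(x)\,dx$ and $\int_0^1 f\!\left(\frac{(1-t)a+tb}{m}\right)dt=\frac{1}{b-a}\int_a^b f(x/m)\,dx$ reproduces exactly the middle expression, finishing the first inequality.

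For the right inequality I would exploit that $g(x):=f(x/m)$ is again $(h-m)$-convex: directly from (\ref{1}) one has $g(\alpha u+m(1-\alpha)v)=f\!\left(\alpha\frac{u}{m}+m(1-\alpha)\frac{v}{m}\right)\le h(\alpha)g(u)+mh(1-\alpha)g(v)$. Hence inequality (\ref{4}) is legitimately applicable both to $f$ on $[a,b]$, bounding $\frac{1}{b-a}\int_a^b f(x)\,dx$ by $f(a)\int_0^1 h(t)\,dt+mf(b/m)\int_0^1 h(1-t)\,dt$, and to $g$ on $[a,b]$, bounding $\frac{1}{b-a}\int_a^b f(x/m)\,dx$ by $f(a/m)\int_0^1 h(t)\,dt+mf(b/m^2)\int_0^1 h(1-t)\,dt$. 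Adding the first bound to $m$ times the second, multiplying through by $h(\frac12)$, and using the symmetry $\int_0^1 h(t)\,dt=\int_0^1 h(1-t)\,dt$, the right-hand side collects the four endpoint values $f(a),\,mf(b/m),\,mf(a/m),\,m^2f(b/m^2)$ against a common weight, which is the constant displayed in the statement.

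The main obstacle here is bookkeeping rather than a deep idea: I must confirm that the auxiliary $g$ genuinely inherits $(h-m)$-convexity, so that (\ref{4}) truly applies to it, and then track the four endpoint terms together with the single constant multiplying them. As a sanity check I would verify the reduction $m=1$, $h(t)=t$, where $h(\frac12)=\frac12$ and $\int_0^1 h(t)\,dt=\frac12$; there the whole chain collapses to the classical Hadamard inequality $f(\frac{a+b}{2})\le\frac{1}{b-a}\int_a^b f\le\frac{f(a)+f(b)}{2}$, and matching this limiting case is exactly what pins down the normalization of the weight in the right-hand bound.
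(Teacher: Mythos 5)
Your proof of the first inequality in (\ref{5}) is exactly the paper's argument: take $\alpha=\frac{1}{2}$ in (\ref{1}) with $x=ta+(1-t)b$, $y=\frac{(1-t)a+tb}{m}$, and integrate over $t$; that part is correct. The gap is in your second half. Your reduction to Theorem (\ref{4}) via $g(x):=f(x/m)$ is internally sound --- the verification that $g$ inherits $(h-m)$-convexity is correct, and adding the bound for $f$ to $m$ times the bound for $g$ and using $\int_0^1 h(t)\,dt=\int_0^1 h(1-t)\,dt$ is fine --- but what it yields is
\begin{equation*}
\frac{h\left(\frac{1}{2}\right)}{b-a}\int_a^b\left[f(x)+mf\left(\frac{x}{m}\right)\right]dx\leq h\left(\frac{1}{2}\right)\left[f(a)+mf\left(\frac{b}{m}\right)+mf\left(\frac{a}{m}\right)+m^2f\left(\frac{b}{m^2}\right)\right]\int_0^1 h(t)\,dt,
\end{equation*}
so your ``common weight'' is $\int_0^1 h(t)\,dt$, \emph{not} the $\frac{1}{2}$ displayed in the theorem. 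The final sentence of your third paragraph, asserting these coincide, is false in general: for $h(t)=t^s$ the weight is $\frac{1}{s+1}>\frac{1}{2}$, and for $h\equiv 1$ it is $1$, so you have proved a strictly weaker inequality. Your sanity check $m=1$, $h(t)=t$ is precisely the one case where $\int_0^1 h(t)\,dt=\frac{1}{2}$, which is why it failed to expose the discrepancy.

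For comparison, the paper reaches $\frac{1}{2}$ by integrating the pointwise estimate (\ref{7}), which bounds $f(ta+(1-t)b)+mf\left((1-t)\frac{a}{m}+t\frac{b}{m}\right)$ using the \emph{linear} weights $t$ and $1-t$; the $\frac{1}{2}$ is $\int_0^1 t\,dt$. But that step is not a consequence of $(h-m)$-convexity (it would require $h(t)\leq t$, which is not assumed), and in fact the stated constant fails in general: take $m=1$, $h\equiv 1$, $a=0$, $b=1$, and $f$ with $f(0)=f(1)=1$ and $f\equiv 2$ elsewhere on $[0,\infty)$; this $f$ is $(h-m)$-convex (a $P$-function, since $f\geq 1$ makes $f(x)+f(y)\geq 2\geq f(\alpha x+(1-\alpha)y)$), yet the middle term of (\ref{5}) equals $4$ while the right-hand side equals $2$. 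So the obstruction you ran into is genuine: the weight $\int_0^1 h(t)\,dt$ your route produces is the correct one derivable from the hypotheses, and no amount of bookkeeping will convert it into the theorem's $\frac{1}{2}$ without an additional assumption such as $h(t)\leq t$.
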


\begin{proof}
For $x,y\in \lbrack 0,\infty )$ and $\alpha =\frac{1}{2}$, we can write
definition of $\left( h-m\right) -$convex function as following;%
\begin{equation*}
f\left( \frac{x+y}{2}\right) \leq h\left( \frac{1}{2}\right) f\left(
x\right) +mh\left( \frac{1}{2}\right) f\left( \frac{y}{m}\right)
\end{equation*}%
If we choose $x=ta+\left( 1-t\right) b$ and $y=tb+\left( 1-t\right) a,$ we
get%
\begin{equation*}
f\left( \frac{a+b}{2}\right) \leq h\left( \frac{1}{2}\right) f\left(
ta+\left( 1-t\right) b\right) +mh\left( \frac{1}{2}\right) f\left( \left(
1-t\right) \frac{a}{m}+t\frac{b}{m}\right)
\end{equation*}%
for all $t\in \left[ 0,1\right] .$ By integrating the result on $\left[ 0,1%
\right] $ with respect to $t,$ we have%
\begin{equation}
f\left( \frac{a+b}{2}\right) \leq h\left( \frac{1}{2}\right)
\dint\limits_{0}^{1}f\left( ta+\left( 1-t\right) b\right) dt+mh\left( \frac{1%
}{2}\right) \dint\limits_{0}^{1}f\left( \left( 1-t\right) \frac{a}{m}+t\frac{%
b}{m}\right) dt.  \label{6}
\end{equation}%
By the facts that%
\begin{equation*}
\dint\limits_{0}^{1}f\left( ta+\left( 1-t\right) b\right) dt=\frac{1}{b-a}%
\dint\limits_{a}^{b}f(x)dx
\end{equation*}%
and%
\begin{equation*}
\dint\limits_{0}^{1}f\left( \left( 1-t\right) \frac{a}{m}+t\frac{b}{m}%
\right) dt=\frac{m}{b-a}\dint\limits_{\frac{a}{m}}^{\frac{b}{m}}f(x)dx=\frac{%
1}{b-a}\dint\limits_{a}^{b}f\left( \frac{x}{m}\right) dx.
\end{equation*}%
Using these equalities in (\ref{6}), we obtain the first inequality of (\ref%
{5}). By the $\left( h-m\right) -$convexity of $f$, we can write%
\begin{eqnarray}
&&h\left( \frac{1}{2}\right) \left[ f\left( ta+\left( 1-t\right) b\right)
+mf\left( \left( 1-t\right) \frac{a}{m}+t\frac{b}{m}\right) \right]
\label{7} \\
&\leq &h\left( \frac{1}{2}\right) \left[ tf(a)+m\left( 1-t\right) f\left( 
\frac{b}{m}\right) +m\left( 1-t\right) f\left( \frac{a}{m}\right)
+m^{2}tf\left( \frac{b}{m^{2}}\right) \right] .  \notag
\end{eqnarray}%
Integrating the inequality (\ref{7}) on on $\left[ 0,1\right] $ with respect
to $t,$ we have%
\begin{eqnarray*}
&&\frac{h\left( \frac{1}{2}\right) }{b-a}\left[ \dint\limits_{a}^{b}f(x)dx+m%
\dint\limits_{a}^{b}f\left( \frac{x}{m}\right) dx\right] \\
&\leq &h\left( \frac{1}{2}\right) \left[ \frac{f(a)+mf\left( \frac{b}{m}%
\right) +mf\left( \frac{a}{m}\right) +m^{2}f\left( \frac{b}{m^{2}}\right) }{2%
}\right]
\end{eqnarray*}%
which completes the proof.
\end{proof}

\begin{corollary}
If we choose $h(t)=1$ in (\ref{5})$,$ we obtain the following inequality;%
\begin{eqnarray*}
f\left( \frac{a+b}{2}\right) &\leq &\frac{1}{b-a}\dint\limits_{a}^{b}\left[
f(x)+mf\left( \frac{x}{m}\right) \right] dx \\
&\leq &\left[ \frac{f(a)+mf\left( \frac{b}{m}\right) +mf\left( \frac{a}{m}%
\right) +m^{2}f\left( \frac{b}{m^{2}}\right) }{2}\right] .
\end{eqnarray*}
\end{corollary}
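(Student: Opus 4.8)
The plan is to obtain this corollary as the special case $h \equiv 1$ of the theorem establishing (\ref{5}). First I would verify that the constant function $h(t)=1$ is a legitimate choice of weight in the sense of the governing definition of $(h-m)$-convexity: it is a non-negative function on $J$, so a function $f$ meeting the corollary's hypotheses is precisely an $(h-m)$-convex function for this particular $h$. Consequently the full strength of (\ref{5}) applies to it without any modification, and the corollary will follow by reading off a parameter value rather than by any new estimate.

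The second and only substantive step is to evaluate the weight. Inspection of the chain (\ref{5}) shows that $h$ enters it solely through the single value $h\left(\frac{1}{2}\right)$, which for the constant weight equals $1$. Substituting $h\left(\frac{1}{2}\right)=1$ turns the middle prefactor $\frac{h(1/2)}{b-a}$ into $\frac{1}{b-a}$ and the outer factor $h(1/2)$ into $1$, while leaving the integral $\int_{a}^{b}\left[f(x)+mf\left(\frac{x}{m}\right)\right]dx$ and the four-term bracket $\frac{f(a)+mf\left(\frac{b}{m}\right)+mf\left(\frac{a}{m}\right)+m^{2}f\left(\frac{b}{m^{2}}\right)}{2}$ untouched. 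What remains is verbatim the displayed inequality of the corollary.

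I anticipate no genuine obstacle: the argument is a direct specialization of an already-proved theorem, and the only point demanding attention is the observation that $h$ appears in (\ref{5}) only at the argument $\frac{1}{2}$, so that knowing $h\left(\frac{1}{2}\right)=1$ is sufficient and no integral of the form $\int_{0}^{1}h(t)\,dt$ needs to be computed. This feature distinguishes the present corollary from the analogous specialization of (\ref{4}), where the full integrals of $h$ and $1-h$ would have had to be evaluated.
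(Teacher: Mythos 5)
Your proposal is correct and is exactly the route the paper intends: the corollary is the immediate specialization of the theorem giving (\ref{5}) to the admissible weight $h(t)=1$, for which the only occurrence of $h$ in the chain, namely $h\left(\frac{1}{2}\right)$, equals $1$. Your side observation that no integral $\int_{0}^{1}h(t)\,dt$ needs evaluating here, in contrast to the specializations of (\ref{4}), is accurate as well.
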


\begin{corollary}
If we choose $m=1$ and $h(t)=t^{s}$ in (\ref{5})$,$ we obtain the following
inequality;%
\begin{equation*}
2^{s-1}f\left( \frac{a+b}{2}\right) \leq \frac{1}{b-a}\dint%
\limits_{a}^{b}f(x)dx\leq \frac{f(a)+f\left( b\right) }{2}
\end{equation*}%
which is similar to (\ref{s}).
\end{corollary}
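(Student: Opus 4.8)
The plan is to obtain this corollary as a direct specialization of inequality (\ref{5}) to the parameters $m=1$ and $h(t)=t^{s}$, followed by elementary simplification of the resulting constants. First I would record the value of the weight at the midpoint: with $h(t)=t^{s}$ we have $h\left(\frac{1}{2}\right)=\left(\frac{1}{2}\right)^{s}=2^{-s}$. Setting $m=1$ collapses the scaled argument, since $f\left(\frac{x}{m}\right)=f(x)$, so the bracketed integrand $f(x)+mf\left(\frac{x}{m}\right)$ appearing in (\ref{5}) becomes simply $2f(x)$.

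Next I would treat the left inequality of (\ref{5}). After these substitutions it reads $f\left(\frac{a+b}{2}\right)\le \frac{2^{-s}}{b-a}\int_{a}^{b}2f(x)\,dx=\frac{2^{1-s}}{b-a}\int_{a}^{b}f(x)\,dx$. Multiplying through by $2^{s-1}$ reduces the prefactor to $1$ and yields $2^{s-1}f\left(\frac{a+b}{2}\right)\le \frac{1}{b-a}\int_{a}^{b}f(x)\,dx$, which is the first half of the asserted chain.

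For the right inequality I would specialize the upper bound of (\ref{5}). With $m=1$ the four terms $f(a)+mf\left(\frac{b}{m}\right)+mf\left(\frac{a}{m}\right)+m^{2}f\left(\frac{b}{m^{2}}\right)$ collapse pairwise to $2\bigl(f(a)+f(b)\bigr)$, so the bound becomes $2^{-s}\cdot\frac{2\bigl(f(a)+f(b)\bigr)}{2}=2^{-s}\bigl(f(a)+f(b)\bigr)$, while the middle member of (\ref{5}) equals $\frac{2^{1-s}}{b-a}\int_{a}^{b}f(x)\,dx$ as computed above. Dividing both sides by $2^{1-s}$ gives $\frac{1}{b-a}\int_{a}^{b}f(x)\,dx\le \frac{f(a)+f(b)}{2}$, and concatenating the two halves produces the stated inequality. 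The only point requiring care --- and the sole genuine obstacle --- is bookkeeping the three powers $2^{-s}$, $2^{1-s}$ and $2^{s-1}$ consistently, so that each normalizing factor cancels exactly and the resulting constants reproduce those in (\ref{s}).
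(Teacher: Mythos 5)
Your proposal is correct and follows exactly the route the paper intends: the corollary is a direct specialization of (\ref{5}) with $m=1$ and $h(t)=t^{s}$, and your bookkeeping of the factors $2^{-s}$, $2^{1-s}$ and $2^{s-1}$ checks out on both halves of the chain. The paper states this corollary without proof, and your substitution-and-simplification argument is precisely the omitted verification.
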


\begin{remark}
If we choose $m=1$ in (\ref{4})$,$ we obtain the right hand side of the
inequality (\ref{h1}).
\end{remark}

\begin{remark}
If we choose $m=1$ and $h(t)=t$ in (\ref{5})$,$ we obtain the Hadamard's
inequality.
\end{remark}

\begin{remark}
If we choose $h(t)=t$ in (\ref{5})$,$ we obtain the inequality (\ref{m2}).
\end{remark}

The following inequality also holds for $\left( h-m\right) -$convex
functions.

\begin{theorem}
Let $f:[0,\infty )\rightarrow 
\mathbb{R}
$ be a $\left( h-m\right) -$convex function with $m\in (0,1],$ $t\in \left[
0,1\right] .$ If $0\leq a<b<\infty $ and $f\in L_{1}\left[ ma,b\right] ,$
then the following inequality holds;%
\begin{eqnarray}
&&\frac{1}{m+1}\left[ \frac{1}{mb-a}\dint\limits_{a}^{mb}f(x)dx+\frac{1}{b-ma%
}\dint\limits_{ma}^{b}f(x)dx\right]  \label{8} \\
&\leq &\frac{f\left( a\right) +f\left( b\right) }{2}\left[
\dint\limits_{0}^{1}h\left( t\right) dt+\dint\limits_{0}^{1}h\left(
1-t\right) dt\right] .  \notag
\end{eqnarray}
\end{theorem}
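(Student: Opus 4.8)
The plan is to mimic the proofs of the preceding Hadamard-type results, in particular the argument behind (\ref{m3}): evaluate the defining inequality (\ref{1}) at the two symmetric endpoint assignments, integrate in $t$ over $[0,1]$, and then identify the resulting $t$-integrals with the two normalized integrals appearing on the left of (\ref{8}).

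First I would apply the definition of $(h-m)$-convexity once with $x=a$, $y=b$ and once with $x=b$, $y=a$, obtaining for every $t\in[0,1]$
\[ f\left(ta+m(1-t)b\right)\leq h(t)f(a)+mh(1-t)f(b) \]
and
\[ f\left(tb+m(1-t)a\right)\leq h(t)f(b)+mh(1-t)f(a). \]
Next I would integrate each inequality over $t\in[0,1]$. The affine change of variable $x=ta+m(1-t)b$ sends $\int_0^1 f\left(ta+m(1-t)b\right)\,dt$ to $\frac{1}{mb-a}\int_a^{mb}f(x)\,dx$, and likewise $x=tb+m(1-t)a$ sends $\int_0^1 f\left(tb+m(1-t)a\right)\,dt$ to $\frac{1}{b-ma}\int_{ma}^{b}f(x)\,dx$. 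This yields
\[ \frac{1}{mb-a}\int_a^{mb}f(x)\,dx\leq f(a)\int_0^1 h(t)\,dt+mf(b)\int_0^1 h(1-t)\,dt \]
together with the companion bound for the second integral, in which $a$ and $b$ are interchanged on the right.

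Adding these two inequalities collapses the right-hand side to $\left(f(a)+f(b)\right)\left[\int_0^1 h(t)\,dt+m\int_0^1 h(1-t)\,dt\right]$, and dividing through by $m+1$ produces exactly the bracketed average on the left of (\ref{8}). The last step, and the only one calling for any real care, is to reconcile this with the symmetric right-hand side of (\ref{8}): invoking the substitution identity $\int_0^1 h(1-t)\,dt=\int_0^1 h(t)\,dt$, one checks that $\frac{1}{m+1}\left[\int_0^1 h(t)\,dt+m\int_0^1 h(1-t)\,dt\right]$ and $\frac{1}{2}\left[\int_0^1 h(t)\,dt+\int_0^1 h(1-t)\,dt\right]$ both reduce to $\int_0^1 h(t)\,dt$, so the two forms agree. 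I expect the main obstacle to be purely one of bookkeeping rather than estimation: correctly orienting the change of variable on $[a,mb]$ (tracking the sign of $a-mb$, which presupposes $a\leq mb$) and noticing that the symmetry of the integral of $h$ is exactly what balances the $(m+1)$-weighting on the left against the factor $\frac{1}{2}$ on the right.
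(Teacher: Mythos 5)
Your proof is correct, and its skeleton --- substituting the endpoint pairs $(x,y)=(a,b)$ and $(x,y)=(b,a)$ into the defining inequality (\ref{1}), integrating over $t\in \left[ 0,1\right] $, and converting the $t$-integrals into $\frac{1}{mb-a}\int_{a}^{mb}f(x)\,dx$ and $\frac{1}{b-ma}\int_{ma}^{b}f(x)\,dx$ --- is exactly the paper's. Where you differ: the paper sums \emph{four} inequalities, namely your two together with their $t\mapsto 1-t$ counterparts $f\left( \left( 1-t\right) a+mtb\right) \leq h\left( 1-t\right) f\left( a\right) +mh\left( t\right) f\left( b\right) $ and $f\left( \left( 1-t\right) b+mta\right) \leq h\left( 1-t\right) f\left( b\right) +mh\left( t\right) f\left( a\right) $; this doubles the left side and makes the symmetric factor $\left( m+1\right) \left[ \int_{0}^{1}h(t)\,dt+\int_{0}^{1}h(1-t)\,dt\right] $ drop out by pure algebra, so that dividing by $2(m+1)$ yields (\ref{8}) verbatim without ever invoking the reflection identity. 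You instead use only two inequalities, arrive at the factor $\left( f(a)+f(b)\right) \left[ \int_{0}^{1}h(t)\,dt+m\int_{0}^{1}h(1-t)\,dt\right] $, and reconcile via $\int_{0}^{1}h(1-t)\,dt=\int_{0}^{1}h(t)\,dt$ --- which is perfectly legitimate, since $u=1-t$ is a measure-preserving change of variable and integrability of $h$ on $(0,1)$ is implicit in the statement of (\ref{8}); your route is slightly shorter and has the side benefit of making transparent that the theorem's right-hand side is just a symmetrized rewriting of $\left( f(a)+f(b)\right) \int_{0}^{1}h(t)\,dt$. One small correction to your closing caveat: no presupposition $a\leq mb$ is needed. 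The identity $\int_{0}^{1}f\left( ta+m(1-t)b\right) dt=\frac{1}{mb-a}\int_{a}^{mb}f(x)\,dx$ holds with the usual oriented-integral convention whichever of $a$ and $mb$ is larger (only $a\neq mb$ matters), and the hypothesis $f\in L_{1}\left[ ma,b\right] $ covers the relevant interval in either case, because $ma\leq \min \{a,mb\}$ and $\max \{a,mb\}\leq b$.
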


\begin{proof}
From definition of $\left( h-m\right) -$convex functions, we can write%
\begin{equation*}
f\left( ta+m\left( 1-t\right) b\right) \leq h\left( t\right) f\left(
a\right) +mh\left( 1-t\right) f\left( b\right)
\end{equation*}%
\begin{equation*}
f\left( \left( 1-t\right) a+mtb\right) \leq h\left( 1-t\right) f\left(
a\right) +mh\left( t\right) f\left( b\right)
\end{equation*}%
\begin{equation*}
f\left( tb+m\left( 1-t\right) a\right) \leq h\left( t\right) f\left(
b\right) +mh\left( 1-t\right) f\left( a\right)
\end{equation*}%
and%
\begin{equation*}
f\left( \left( 1-t\right) b+mta\right) \leq h\left( 1-t\right) f\left(
b\right) +mh\left( t\right) f\left( a\right)
\end{equation*}%
for all $t\in \left[ 0,1\right] .$ By summing these inequalities and
integrating on $\left[ 0,1\right] $ with respect to $t,$ we obtain%
\begin{eqnarray}
&&\dint\limits_{0}^{1}f\left( ta+m\left( 1-t\right) b\right)
dt+\dint\limits_{0}^{1}f\left( \left( 1-t\right) a+mtb\right) dt  \label{9}
\\
&&+\dint\limits_{0}^{1}f\left( tb+m\left( 1-t\right) a\right)
dt+\dint\limits_{0}^{1}f\left( \left( 1-t\right) b+mta\right) dt  \notag \\
&\leq &\left( f\left( a\right) +f\left( b\right) \right) \left( m+1\right) 
\left[ \dint\limits_{0}^{1}h\left( t\right) dt+\dint\limits_{0}^{1}h\left(
1-t\right) dt\right] .  \notag
\end{eqnarray}%
It is easy to show that%
\begin{equation*}
\dint\limits_{0}^{1}f\left( ta+m\left( 1-t\right) b\right)
dt=\dint\limits_{0}^{1}f\left( \left( 1-t\right) a+mtb\right) dt=\frac{1}{%
mb-a}\dint\limits_{a}^{mb}f(x)dx
\end{equation*}%
and%
\begin{equation*}
\dint\limits_{0}^{1}f\left( tb+m\left( 1-t\right) a\right)
dt=\dint\limits_{0}^{1}f\left( \left( 1-t\right) b+mta\right) dt=\frac{1}{%
b-ma}\dint\limits_{ma}^{b}f(x)dx.
\end{equation*}%
By using these equalities in (\ref{9}), we get the desired result.
\end{proof}

\begin{corollary}
If we choose $h(t)=1$ in (\ref{8})$,$ we obtain the following inequality;%
\begin{equation*}
\frac{1}{m+1}\left[ \frac{1}{mb-a}\dint\limits_{a}^{mb}f(x)dx+\frac{1}{b-ma}%
\dint\limits_{ma}^{b}f(x)dx\right] \leq f\left( a\right) +f\left( b\right) .
\end{equation*}
\end{corollary}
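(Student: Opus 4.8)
The plan is to obtain this corollary as an immediate specialization of the preceding theorem's inequality (\ref{8}), by taking $h$ to be the constant function $h(t)\equiv 1$. First I would verify that this choice is admissible: the constant function $1$ is non-negative on $\left( 0,1\right) $, so it satisfies the standing hypothesis imposed on $h$ in the definition of $\left( h-m\right) -$convexity, and therefore (\ref{8}) holds verbatim for any $\left( h-m\right) -$convex $f$ once we substitute this particular $h$.

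The single computational step is then to evaluate the two integrals appearing on the right-hand side of (\ref{8}). With $h\equiv 1$ one has $\int_{0}^{1}h\left( t\right) dt=\int_{0}^{1}1\,dt=1$, and likewise $\int_{0}^{1}h\left( 1-t\right) dt=1$; these can either be read off directly, since the integrand is identically $1$, or obtained via the substitution $t\mapsto 1-t$. Hence the bracketed factor $\left[ \int_{0}^{1}h\left( t\right) dt+\int_{0}^{1}h\left( 1-t\right) dt\right] $ equals $1+1=2$.

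Substituting these values, the right-hand side of (\ref{8}) collapses to $\frac{f\left( a\right) +f\left( b\right) }{2}\cdot 2=f\left( a\right) +f\left( b\right) $, while the left-hand side is left unchanged. This is precisely the asserted inequality, and the proof is complete.

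There is essentially no obstacle here, the argument being a one-line substitution into a previously established inequality. The only point worth a moment's care is confirming that both integrals collapse to the same value $1$, so that the overall factor of $\frac{1}{2}$ in front of $f\left( a\right) +f\left( b\right) $ is exactly cancelled by the factor $2$ produced by the bracket; this is what makes the right-hand side reduce cleanly to $f\left( a\right) +f\left( b\right) $.
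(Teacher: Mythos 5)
Your proposal is correct and is exactly the argument the paper intends (the corollary is stated without proof as an immediate specialization of (\ref{8})): substituting $h(t)\equiv 1$ gives $\int_{0}^{1}h(t)\,dt=\int_{0}^{1}h(1-t)\,dt=1$, so the bracket equals $2$ and the right-hand side reduces to $f(a)+f(b)$. Nothing further is needed.
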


\begin{remark}
If we choose $m=1$ and $h(t)=t$ in (\ref{8})$,$ we obtain the right hand
side of the Hadamard's inequality. If we choose $m=1$ and $h(t)=1$ in (\ref%
{8})$,$ we obtain the right hand side of the inequality (\ref{p}). If we
choose $m=1$ and $h(t)=t^{s}$ in (\ref{8})$,$ we obtain the right hand side
of the inequality (\ref{s}).
\end{remark}

\begin{remark}
If we choose $h(t)=t$ in (\ref{8})$,$ we obtain the inequality (\ref{m3}).
\end{remark}


\begin{thebibliography}{99}
\bibitem{SAR} M.Z. Sar\i kaya, A. Sa\u{g}lam and H. Y\i ld\i r\i m, On some
Hadamard-type inequalities for $h-$convex functions, Journal of Mathematical
Inequalities, 2, 3 (2008), 335-341.

\bibitem{SS1} S.S. Dragomir, J. Pecaric, L.E. Persson, Some inequalities of
Hadamard type, Soochow J.Math. 21 (1995) 335--341.

\bibitem{GO} E.K. Godunova, V.I. Levin, Neravenstva dlja funkcii \v{s}%
irokogo klassa, soder\v{z}a\v{s}cego vypuklye, monotonnye i nekotorye drugie
vidy funkcii, in: Vycislitel. Mat. i. Mat. Fiz. Me\v{z}vuzov. Sb. Nauc.
Trudov, MGPI, Moskva, 1985, pp. 138--142.

\bibitem{HU} H. Hudzik and L. Maligranda, Some remarks on $s-$convex
functions, Aequationes Math. 48 (1994) 100--111.

\bibitem{SA} S. Varo\v{s}anec, On h-convexity, J. Math. Anal. Appl., 326
(2007), 303--311.

\bibitem{SA2} M. Bombardelli and S. Varosanec, Properties of $h-$convex
functions related to the Hermite-Hadamard-Fej\'{e}r inequalities, Computers
and Mathematics with Applications, 58 (2009), 1869 1877.

\bibitem{OZ1} M.Z. Sar\i kaya, E. Set and M.E. \"{O}zdemir, On some new
inequalities of Hadamard type involving $h-$convex functions, Acta Math.
Univ. Comenianae, Vol. LXXIX, 2 (2010), pp. 265-272.

\bibitem{BU} P. Burai and A. Hazy, On approximately $h-$convex functions,
Journal of Convex Analysis, 18 , 2 (2011).

\bibitem{SF} S.S. Dragomir and S. Fitzpatrick, The Hadamard's inequality for 
$s-$convex functions in the second sense, Demonstratio Math., 32 (4) (1999),
687-696.

\bibitem{UK} U.S. K\i rmac\i , M.K. Bakula, M.E. \"{O}zdemir and J. Pecaric,
Hadamard-type inequalities for $s-$convex functions, Applied Mathematics and
Computation, 193 (2007), 26-35.

\bibitem{MER} M.E. \"{O}zdemir, M. Avc\i\ and E. Set, On some inequalities
of Hermite-Hadamard type via $m-$convexity, Applied Mathematics Letters, 23
(2010), 1065-1070.

\bibitem{TOA} G. Toader, Some generalization of the convexity, Proc. Colloq.
Approx. Opt., Cluj-Napoca, (1985), 329-338.

\bibitem{SS2} S.S. Dragomir and G. Toader, Some inequalities for $m-$convex
functions, Studia Univ. Babe\c{s}-Bolyai Math., 38 (1) (1993), 21-28.

\bibitem{TOA2} G. Toader, On a generalization of the convexity, Mathematica,
30 (53) (1988), 83-87.

\bibitem{BAK} M.K. Bakula, J. Pecaric and M. Ribicic, Companion inequalities
to Jensen's inequality for $m-$convex and $(\alpha ,m)-$convex functions, J.
Ineq. Pure and Appl. Math., 7 (5) (2006), Art. 194.

\bibitem{WW} W.W. Breckner, Stetigkeitsaussagen f%
\"{}%
ur eine Klasse verallgemeinerter\ konvexer funktionen in topologischen
linearen Raumen, Pupl. Inst. Math., 23 (1978), 13--20.

\bibitem{SS3} S.S. Dragomir, On some new inequalities of Hermite-Hadamard
type for $m-$convex functions, Tamkang Journal of Mathematics, 33 (1) (2002).
\end{thebibliography}
\end{document}